\newtheorem{theorem}{Theorem}[section]
\newtheorem{lemma}[theorem]{Lemma}
\newtheorem{corollary}[theorem]{Corollary}
\newtheorem{definition}[theorem]{Definition}
\theoremstyle{remark}
\newtheorem{remark}[theorem]{\bf Remark}
\newtheorem{example}[theorem]{\bf Example}
\renewcommand{\leq}{\leqslant}
\renewcommand{\geq}{\geqslant}
\newcommand{\ptl}{\partial}
\newcommand{\rr}{\mathbb{R}}
\newcommand{\nn}{\mathbb{N}}
\newcommand{\C}{\mathscr{C}_2}
\numberwithin{equation}{section}
\begin{document}

\title{Borsuk number for planar convex bodies}

\author[A. Ca\~nete]{Antonio Ca\~nete}
\address{Departamento de Matem\'atica Aplicada I \\ Universidad de Sevilla}
\email{antonioc@us.es}

\author[U. Schnell]{Uwe Schnell}
\address{Faculty of Mathemics and Sciences \\ University of Applied Sciences Zittau/G{\"or}litz}
\email{uschnell@hszg.de}

\thanks{The first author is partially supported by the MICINN project MTM2017-84851-C2-1-P,
and by Junta de Andaluc\'ia grant FQM-325.}


\maketitle

\begin{abstract}
By using some simple tools from graph theory,
we obtain a characterization of the planar convex bodies with Borsuk number equal to two.
This result allows to give some examples of planar convex bodies with Borsuk number equal to three.
Moreover, we also prove that the unique centrally symmetric planar convex bodies
with Borsuk number equal to three are the Euclidean balls.

\keywords{Borsuk's question, Borsuk number, planar convex bodies, central symmetry, diameter graph}


\end{abstract}

\section{Introduction}
In 1933, K.~Borsuk formulated the following question \cite{borsuk}:
\begin{quotation}
\emph{Let $C$ be a bounded set in $\rr^n$.
Is it possible to divide $C$ into $n+1$ subsets with diameters
strictly smaller than the diameter of $C$?}
\end{quotation}
This question has become a classical problem in Geometry~\cite[\S.~D14]{cfg},
being deeply studied during the last century.
In the planar case ($n=2$), Borsuk gave an affirmative answer in his original paper~\cite{borsuk},
basing the proof in the following nice result by P\'al~\cite{pal}, see also~\cite[Lemma~1.1]{bolt-gohb}:
any planar set $C$ with diameter $h>0$ is contained in a regular hexagon $H$ of width equal to $h$.
It is not difficult to check that $H$ can be divided into three congruent subsets with diameters less than $h$,
and consequently, the induced division for $C$ will satisfy the same property.
For $n=3$, the answer is also affirmative and was proven by several authors with different techniques~\cite{perkal,egg,g,heppes}.
Moreover, the same property holds in $\rr^n$ when $C$ is compact, convex with smooth boundary~\cite{had},
or when $C$ is compact, convex and centrally symmetric~\cite{riesling}.
However, Borsuk's question is not true in general, as the celebrated counterexample by Kahn and Kalai in 1993 exhibits \cite{kk}.
In that work, they consider an equivalent discrete formulation of the problem (see~\cite{larman}),
and they deduced, by using a combinatorial result by Frankl and Wilson~\cite{fw},
that Borsuk's question is not true in $\rr^n$ for $n\geq 2015$
(indicating also that the same happens in $\rr^{1325}$).
This was the initial point for a \emph{sort of competition} searching for the least-dimension Euclidean space
for which Borsuk's question does not hold \cite{nilli,raigo,g-w,hinrichs,pikhurko,h-r}.
Up to our knowledge, the last recent results in this direction show that the answer is negative
in $\rr^{65}$~\cite{bondarenko} and in $\rr^{64}$~\cite{j-b}.
In these last two works, the authors give elaborated counterexamples
by considering a particular strongly regular graph and constructing a finite set
of points in the corresponding sphere (in fact, the second reference is
a refinement of the first one).

Particularizing in $\rr^2$, the affirmative answer to Borsuk's question
implies that any planar bounded set $C$ can be divided into
three (or less) subsets with diameters strictly smaller than the diameter of $C$.
It is clear that there are planar sets which can be
partitioned into \emph{two} subsets with smaller diameters (for instance, consider a square, whose diameter is attained by the distance between any two
non-consecutive vertices, and the division determined by any horizontal line, see Figure~\ref{fig:2}).
\begin{figure}[ht]
\centering
  \includegraphics[width=0.5\textwidth]{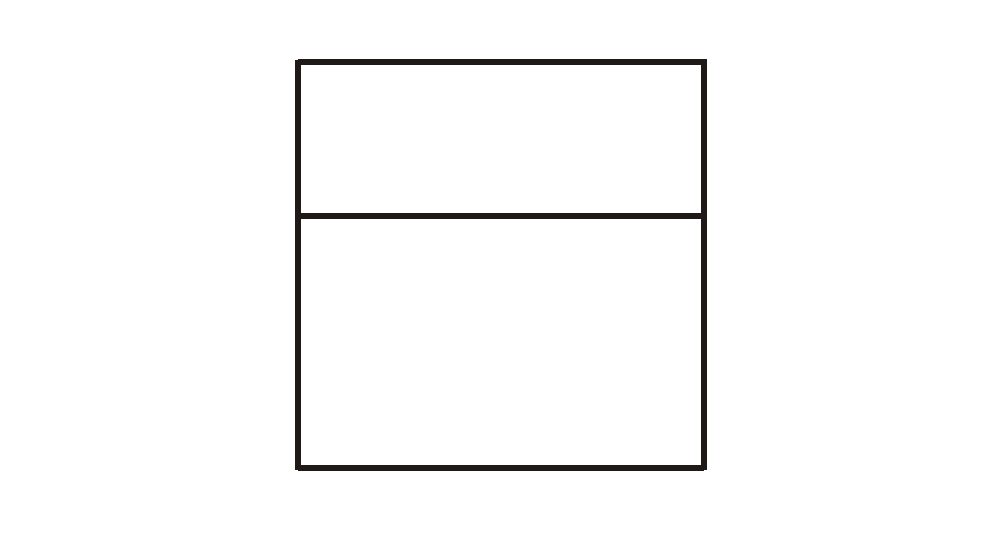}\\
  \caption{A division of a square into two subsets with strictly smaller diameters}\label{fig:2}
\end{figure}
This fact leads to the following reformulation of the problem, which will be stated for general dimension:
for a given bounded set $C$ in $\rr^n$, let $\alpha(C)$ denote the minimal number of subsets 
with strictly smaller diameters
into which $C$ can be decomposed. This number is usually called \emph{Borsuk number} of $C$.
The original question by Borsuk can be thus rewritten as:
\begin{quotation}
\begin{center}
\emph{Let $C$ be a bounded set in $\rr^n$. Is it true that $\alpha(C)\leq n+1$?}
\end{center}
\end{quotation}

The calculation of the precise value of Borsuk number for a given bounded set
also constitutes an interesting issue
and has been considered in different texts in literature, see for instance~\cite{w,bolt-gohb}.
Moreover, in~\cite[\S.~4]{kolo} we can find a nice equivalent formulation of Borsuk's question
in terms of the Minkowski sum of two convex compact sets of constant width,
based on the precise values of the corresponding Borsuk numbers. 

As noted previously, we know that $\alpha(C)$ is less than or equal to three
for any planar bounded set $C$ (since Borsuk's question is true).
As described in~\cite[\S.~1.4]{bolt-gohb},
it is natural investigating when the equality to three occurs.
The first characterization of the planar sets with Borsuk number equal to three is due to Boltyanskii~\cite{bolt},
by means of the notion of \emph{completion of constant width}:
P\'al proved that any set $C$ (in arbitrary dimension) with diameter $h>0$ can be covered by a compact set with
constant width equal to $h$~\cite{pal}, see also~\cite[\S.~15.64]{b-f}. This constant-width set is called a
completion of $C$, and is not unique in general.
Boltyanskii's characterization (whose proof is purely geometrical) states that $\alpha(C)=3$ if and only if the completion of $C$ is unique~\cite[Th.~1.3]{bolt-gohb}.
Unfortunately, it is not easy to check in practice whether the completion of a general set is unique,
and so the applicability of this result is not very extensive.
Later on, K.~Ko{\l}odziejczyk provided a new characterization in the convex setting:
for a given planar convex compact set $C$,
a \emph{diameter segment} of $C$ will be a segment contained in $C$ with length equal to the diameter of $C$.
Then, $\alpha(C)=2$ if and only if there exists a non-diameter segment in $C$
which intersects the interior of any diameter segment of $C$ \cite[Th.~3.1]{kolo}.
A definitive characterization holds when restricting
to the family $\mathscr{C}_2$ of \emph{centrally symmetric} planar convex compact sets:
D.~Ko{\l}odziejczyk proved that, if $C\in\C$, then $\alpha(C)=3$ if and only if $C$ is an Euclidean ball~\cite[Appendix]{danuta}.
We note that this nice result is stated for a wider family of sets,
namely, the planar convex bodies with all the diameter segments being concurrent at one point,
and additionally, for general dimension.


%

In these notes we introduce an alternative approach to these questions,
by using some tools from graph theory.
This point of view has not been used when treating this kind of problems,
and it might provide some advances in future.
We have organized these notes into two separate sections.
Section~\ref{se:centrally} will be devoted to the family $\C$ of centrally symmetric planar convex compact sets,
and in Section~\ref{se:non} we will consider general planar convex compact sets.
We have chosen this structure because we think it will show
the remarkable differences appearing due to the lack of symmetry.

In Section~\ref{se:centrally} we will recover the previous detailed results in $\C$
by means of the \emph{diameter graph} of a set, as follows.
For a given planar compact set $C$, we can consider the diameter graph $G_C=(V,E)$ associated to $C$,
whose set of vertices $V$ is composed by the endpoints of the diameter segments of $C$,
and whose set of edges $E$ is composed precisely by such diameter segments, see~\cite{dol,fprr}.
With this notation, our Lemma~\ref{le:union} provides a characterization
for the sets in $\C$ with Borsuk number equal to two,
expressed in terms of the associated diameter graph:
for any $C\in\C$, we have that $\alpha(C)=2$ if and only if $V\neq\ptl C$.
This result leads to the known fact that the only centrally symmetric planar convex bodies
with Borsuk number equal to three are the Euclidean balls (see Theorem~\ref{th:main}).

Finally, in Section~\ref{se:non} we focus on general planar compact convex sets.
Some similar reasonings involving the diameter graph will allow us to obtain Theorem~\ref{prop:non},
where a characterization for the sets with Borsuk number equal to two is established.
In particular, this result easily provides several examples of planar sets with Borsuk number equal to three,
different from an Euclidean ball (some of them are depicted in Figures~\ref{fig:3} and \ref{fig:pentagon2}).

\section{Some basic definitions}

In this section we give some simple definitions for planar compact sets which will be used along this paper.

Let $C\subset\rr^2$ be a compact set (also referred to as a body, as usual), and denote by $d$ the Euclidean distance in $\rr^2$. Let
$$D(C)=\max\{d(x,y):x,y\in C\}$$
be the diameter of $C$, that is, the maximal distance between two points in $C$.
Any line segment $\overline{x\,y}$ with endpoints $x,y\in\ptl C$ satisfying that $d(x,y)=D(C)$
will be called \emph{a diameter segment} of $C$.
This notion leads us to a particular planar graph associated to any planar compact set in the following way, see \cite{bowers,dol,fprr}.

\begin{definition}
\label{def:dg}
Let $C$ be a planar compact set.
The diameter graph $G_C=(V,E)$ associated to $C$ is the planar graph
whose vertices are the points in $\ptl C$ which are endpoints of the diameter segments of $C$, and whose edges are the diameter segments. 
In other words,
\begin{align*}
V=\{x_i&\in\ptl C: \exists\ y_i\in\ptl C \text{ with } d(x_i,y_i)=D(C)\}, \text{ and }
\\[2mm]
&\overline{x_i\,x_j} \in E \text{ if and only if } d(x_i,x_j)=D(C).
\end{align*}
\end{definition}

\begin{definition}
\label{def:Bn}
Let $C$ be a planar compact set.
The Borsuk number of $C$ is defined as the minimal number $\alpha(C)\in\nn$ satisfying that $C$ can be divided
into $\alpha(C)$ subsets, all of them with diameters strictly smaller than $D(C)$.
\end{definition}

\begin{remark}
We recall that, since Borsuk's question is true in $\rr^2$, then $\alpha(C)$ is at most three for any planar compact set $C$.
\end{remark}

\begin{remark}
We point out that for an \emph{unbounded} planar set $C$,
the diameter $D(C)$ could be defined by using the \emph{supremum} instead of the maximum above,
thus yielding $D(C)=\infty$.
In this situation, any division of $C$ into $k$ subsets
($k\in\nn$, $k\geq 2$) will have at least one unbounded subset, with infinite diameter.
Consequently we cannot find a division of $C$ with strictly smaller diameters in this case.
\end{remark}

\section{Centrally symmetric planar convex bodies}
\label{se:centrally}

Throughout this section we will focus on the family $\C$ of centrally symmetric planar convex bodies.
We will see that the symmetry assumption allows us to completely characterize the sets of this family
whose Borsuk number is equal to three.

Consider a planar convex body $C$ which is centrally symmetric.
Recall that this means that $C$ is invariant under the action of the rotation of angle $\pi$
centered at a point $p\in C$, which will be called the \emph{center of symmetry} of $C$.
We first prove some properties for the diameter graph associated to $C$ in this setting.

\begin{lemma}
\label{le:through}
Let $C\in\C$, with $p$ its center of symmetry.
Then, any diameter segment of $C$ passes through $p$.
\end{lemma}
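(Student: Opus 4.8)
The plan is to reduce everything to the triangle inequality after placing the center of symmetry at the origin. First I would fix coordinates so that $p=0$; the central symmetry of $C$ then reads $C=-C$, i.e.\ $-z\in C$ whenever $z\in C$. Introduce $R=\max\{|z|:z\in C\}$, the largest distance from $p$ to a point of $C$, which is attained since $C$ is compact (and we may assume $R>0$, as otherwise $C=\{p\}$ has no diameter segments and the statement is vacuous).

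The first step is to identify the diameter. For any $x,y\in C$ the triangle inequality gives $d(x,y)=|x-y|\le|x|+|y|\le 2R$, so $D(C)\le 2R$. Conversely, if $z_0\in C$ realizes $|z_0|=R$, then $-z_0\in C$ as well and $d(z_0,-z_0)=2R$; hence $D(C)=2R$.

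The second step analyses the equality case. Let $\overline{xy}$ be an arbitrary diameter segment, so $|x-y|=2R$. Equality throughout the chain $|x-y|\le|x|+|y|\le 2R$ forces first $|x|=|y|=R$, and second equality in the triangle inequality $|x-y|=|x|+|y|$. The latter holds precisely when $x$ and $-y$ are nonnegatively proportional, that is, when $x$ and $y$ point in opposite directions from the origin. Combined with $|x|=|y|=R>0$, this yields $x=-y$, so $p=0$ is the midpoint of $\overline{xy}$; in particular the diameter segment passes through $p$, as claimed.

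I do not expect a genuine obstacle here: the only point requiring care is the characterization of the equality case in the triangle inequality, together with the bookkeeping that $|x|=|y|=R$ upgrades the relation ``opposite directions'' to the exact identity $x=-y$. An alternative, purely geometric route would assume for contradiction that some diameter segment misses $p$, reflect it through $p$ to obtain a parallel diameter segment of the same length, and then exhibit a strictly longer chord joining an endpoint of one to an endpoint of the other; this amounts to the same estimate, and I would retain the coordinate computation above as the cleaner argument.
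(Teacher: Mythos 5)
Your proof is correct, but it takes a different route from the paper's. The paper argues by contradiction: assuming a diameter segment $\overline{x\,y}$ misses $p$, it drops the perpendicular from $p$ to the chord with foot $\widetilde{p}$, and derives $d(x,x')=2\,d(x,p)>2\,d(x,\widetilde{p})\ge d(x,\widetilde{p})+d(y,\widetilde{p})=d(x,y)=D(C)$, where $x'$ is the reflection of $x$ through $p$. You instead argue directly: you first identify $D(C)=2R$ with $R=\max\{|z-p|:z\in C\}$ (using that the antipode $2p-z_0$ of a farthest point lies in $C$), and then run the equality case of the chain $|x-y|\le|x-p|+|y-p|\le 2R$ to conclude $|x-p|=|y-p|=R$ and $y=2p-x$. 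Both proofs rest on the same two ingredients --- central symmetry supplies the antipodal point, and $d(x,x')=2\,d(x,p)$ --- but your version avoids the perpendicular-foot construction (and the implicit verification that $\widetilde p$ lies on the segment so that $d(x,\widetilde p)+d(y,\widetilde p)=d(x,y)$), at the cost of the standard characterization of equality in the triangle inequality. Your argument also yields slightly more than the statement: every diameter segment has $p$ as its \emph{midpoint} and its endpoints lie on the circle of radius $\tfrac12 D(C)$ about $p$, which is exactly the fact exploited later in the proof of Theorem~\ref{th:main}.
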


\begin{proof}
Let $\overline{x\,y}$ be a diameter segment of $C$, and assume that it does not pass through $p$.
Let $\widetilde{p}$ be the intersection point between $\overline{x\,y}$
and the line perpendicular to $\overline{x\,y}$ passing through $p$, see Figure~\ref{fig:line}.
\begin{figure}[h]
\centering
	\includegraphics[width=0.55\textwidth]{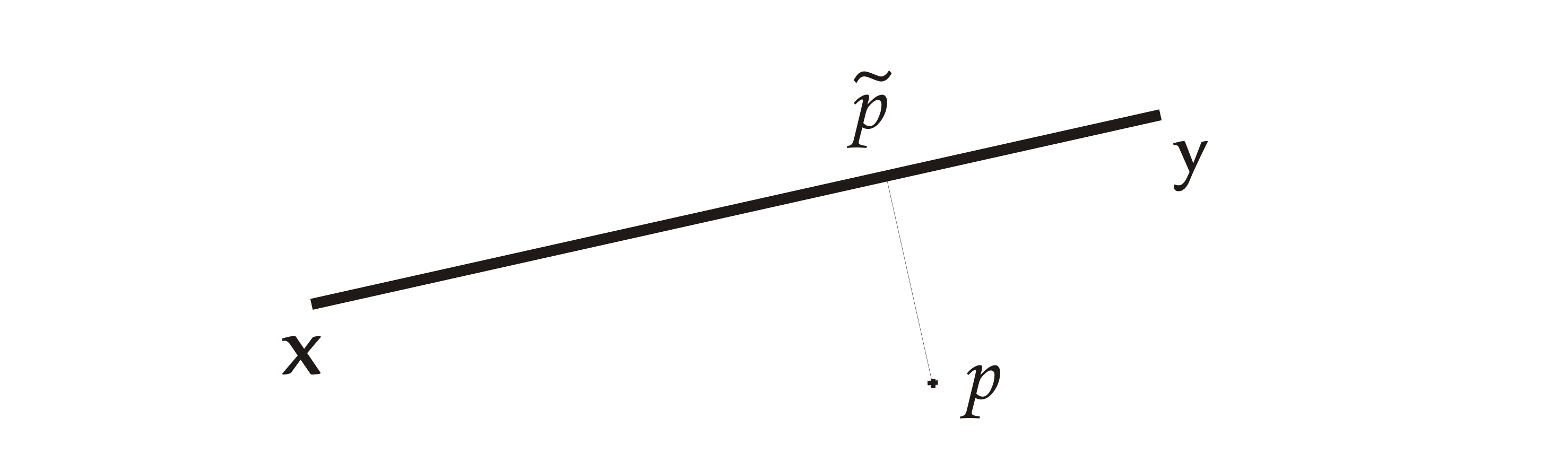}
	\caption{A diameter segment not containing $p$}
    \label{fig:line}
\end{figure}
Without loss of generality, we can assume that $d(x,\widetilde{p})\geq d(y,\widetilde{p})$.
Denote by $x'\in\ptl C$ the symmetric point of $x$ with respect to $p$. Then,
\begin{align*}
d(x,x')&=2\,d(x,p)>2\,d(x,\widetilde{p})\geq d(x,\widetilde{p})+d(y,\widetilde{p})=d(x,y)=D(C),
\end{align*}
which is a contradiction. Thus $\overline{x\,y}$ necessarily passes through $p$, as stated.
\end{proof}

\begin{lemma}
\label{le:bipartite}
Let $C\in\C$. Then, the diameter graph $G_C$ associated to $C$ is bipartite.
\end{lemma}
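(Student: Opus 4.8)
The plan is to upgrade Lemma~\ref{le:through} from ``every diameter segment passes through $p$'' to the sharper statement that $p$ is the \emph{midpoint} of every diameter segment, and then to read off bipartiteness from the resulting antipodal structure. First I would take an arbitrary diameter segment $\overline{x\,y}$. By Lemma~\ref{le:through} the center $p$ lies on $\overline{x\,y}$, so $d(x,p)+d(p,y)=d(x,y)=D(C)$. Letting $x'=2p-x$ be the reflection of $x$ through $p$, central symmetry gives $x'\in C$, whence $2\,d(x,p)=d(x,x')\le D(C)$ and so $d(x,p)\le D(C)/2$; the symmetric estimate gives $d(y,p)\le D(C)/2$. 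Since these two distances sum to $D(C)$, both must equal $D(C)/2$, so $p$ is the midpoint and, in particular, $y=2p-x=x'$.

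This midpoint claim is the crux of the argument, and everything else is essentially automatic. Two consequences follow. First, every vertex of $G_C$ lies on the circle $S$ of radius $D(C)/2$ centered at $p$. Second, the only endpoint that can be joined to a given vertex $x$ by an edge is its antipode $x'$, so each vertex has degree exactly one; thus $G_C$ is a disjoint union of single edges, and any such matching is trivially bipartite. To keep the argument robust even when $V$ is infinite (as for the Euclidean disk, where no line through $p$ avoids all vertices), I would instead exhibit an explicit $2$-coloring rather than invoke the matching structure directly.

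For the coloring I would fix any line $\ell$ through $p$, color each vertex lying in one open half-plane determined by $\ell$ with the first color and each vertex in the other open half-plane with the second color, and assign opposite colors to the (at most two, mutually antipodal) vertices of $V\cap\ell\subseteq S\cap\ell$. The point reflection $x\mapsto 2p-x$ interchanges the two open half-planes and maps $\ell$ to itself, so the two endpoints of every edge $\overline{x\,x'}$ receive different colors; hence the coloring is proper and $G_C$ is bipartite. I do not expect any genuine obstacle here: the only work lies in the elementary distance (in)equalities establishing that $p$ is a midpoint, and the sole point requiring care is the treatment of vertices lying on the separating line $\ell$, which is handled at once since such vertices form a single antipodal pair.
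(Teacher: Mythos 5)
Your proof is correct and follows essentially the same route as the paper's: both deduce from Lemma~\ref{le:through} that every vertex of $G_C$ has degree one, so that $G_C$ is a matching and hence trivially bipartite. Your additional observation that $p$ is the \emph{midpoint} of every diameter segment (so the unique neighbour of $x$ is its antipode $x'=2p-x$), together with the explicit half-plane colouring, is a refinement the paper does not spell out here but uses implicitly later in Remark~\ref{obs:altern} and Theorem~\ref{th:main}.
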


\begin{proof}
From Lemma~\ref{le:through}, any diameter segment of $C$ passes through the center of symmetry of $C$,
which necessarily implies that each vertex of $G_C$ has a unique incident edge, and so it is a vertex of degree one.
This directly gives that $G_C$ is bipartite.
\end{proof}

\begin{remark}
\label{obs:altern}
Consider a centrally symmetric planar convex body $C$, with $p$ its center of symmetry,
and its associated diameter graph $G_C=(V,E)$.
Lemma~\ref{le:bipartite} asserts that $G_C$ is bipartite, which means that $V$ can be decomposed
into two disjoint subsets $V_R$, $V_B \subset\ptl C$,
satisfying that the endpoints of each edge of $G_C$ lie in different subsets.
Moreover, $V_R$ and $V_B$ are symmetric with respect to $p$,
and without loss of generality, we can assume that the vertices in $V_R$
are all placed together along $\ptl C$, with no alternation with the vertices in $V_B$.
\end{remark}

\begin{remark}
The reader may compare Lemma~\ref{le:bipartite} above with \cite[Th.~1]{dol}, where it is shown, in the general case
(i.e., without the assumption of central symmetry),
that the diameter graph minus one (particular) vertex is bipartite.
\end{remark}

The following result characterizes the centrally symmetric planar convex bodies with Borsuk number equal to two,
by means of the corresponding diameter graph. 

\begin{lemma}
\label{le:union}
Let $C\in\C$, and let $G_C=(V,E)$ be the diameter graph associated to $C$.
Then,
$\alpha(C)=2$ if and only if $V\neq\ptl C$.
\end{lemma}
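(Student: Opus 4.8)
The plan is to prove the two implications separately, using throughout the following structural fact: by Lemma~\ref{le:through} together with central symmetry, every diameter segment of $C$ has the form $\overline{x\,x'}$, where $x'=2p-x$ is the point symmetric to $x\in\ptl C$ with respect to the center $p$, and $p$ is its \emph{midpoint}; in particular $D(C)=d(x,x')=2\,d(x,p)$ for every vertex $x\in V$. I will also use that $\alpha(C)\geq 2$ always, since a convex body cannot coincide with a single subset of strictly smaller diameter. Thus proving $\alpha(C)=2$ amounts to exhibiting a $2$-division, while proving $\alpha(C)=3$ amounts to ruling one out.

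For the implication $V\neq\ptl C\Rightarrow\alpha(C)=2$, I would fix a boundary point $q\in\ptl C\setminus V$. Its symmetric point $q'$ is again a non-vertex: if $q'\in V$ it would be the endpoint of a diameter segment, whose other endpoint is forced to be $q$, making $q\in V$, a contradiction. Hence the line $\ell$ through $q$ and $q'$ passes through $p$ (as $q,p,q'$ are collinear) and meets $\ptl C$ exactly in $\{q,q'\}$, because the relative interior of the chord $\ell\cap C$ lies in $\inte C$. The key observation is then that every diameter segment crosses $\ell$ transversally at $p$: each such segment passes through its midpoint $p\in\ell$, and it cannot lie along $\ell$, since otherwise its vertex endpoints would be $q,q'$, which are non-vertices. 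Consequently the two endpoints of every diameter segment fall into the two distinct open half-planes determined by $\ell$. I would then take the $2$-division of $C$ induced by the two sides of $\ell$ (assigning the chord $\ell\cap C$ to one of the parts). Since each part lies in a closed half-plane whose complementary open half-plane contains an endpoint of every diameter segment, no part contains both endpoints of a diameter segment, so neither part can realize the distance $D(C)$; hence each has diameter strictly smaller than $D(C)$, giving $\alpha(C)=2$.

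For the reverse implication I argue by contraposition, showing $V=\ptl C\Rightarrow\alpha(C)=3$. Assume $V=\ptl C$ and, for contradiction, a $2$-division $C=S_1\sqcup S_2$ with $D(S_i)<D(C)$. Under $V=\ptl C$ every boundary point $x$ satisfies $d(x,x')=D(C)$, so $x$ and $x'$ never lie in the same part. Writing $A=S_1\cap\ptl C$ and $B=S_2\cap\ptl C$, this means the symmetry $x\mapsto x'$ maps $A$ into $B$ and $B$ into $A$, and being an involution it carries $A$ onto $B$, hence $\overline A$ onto $\overline B$. If either $A$ or $B$ is empty, then one part contains all of $\ptl C$ and therefore some antipodal pair, forcing its diameter to be $D(C)$; so assume both are nonempty. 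Since $\ptl C$ is a topological circle, hence connected, and $\overline A\cup\overline B=\ptl C$, the closed sets $\overline A$ and $\overline B$ cannot be disjoint, so there is a point $z\in\overline A\cap\overline B$. Applying the symmetry gives $z'\in\overline A\cap\overline B$ as well, so both $z$ and $z'$ lie in $\overline A$, i.e. are limits of points of $A\subseteq S_1$. Choosing sequences in $A$ converging to $z$ and to $z'$, their mutual distances tend to $d(z,z')=D(C)$, forcing $D(S_1)\geq D(C)$, a contradiction. Hence no $2$-division exists and, by Borsuk's theorem, $\alpha(C)=3$.

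I expect the reverse implication to be the delicate one. The forward direction is an explicit cut along a line through the center, whereas ruling out \emph{every} $2$-division requires a non-constructive, connectedness-type argument on the boundary circle (a one-dimensional shadow of the obstruction underlying Borsuk's problem). The points to handle with care are the nonemptiness and disjointness failure $\overline A\cap\overline B\neq\emptyset$, together with the symmetry $\overline A=\{z':z\in\overline B\}$ of the boundary decomposition; once these are secured, the limiting distance estimate that closes the contradiction is routine.
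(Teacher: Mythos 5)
Your proof is correct, and its overall strategy coincides with the paper's; the only genuine divergence is in how the ``only if'' direction is handled. The ``if'' direction is exactly the paper's argument: pick a non-vertex $q\in\ptl C\setminus V$, note its antipode $q'$ is also a non-vertex, and cut along the chord $\overline{q\,q'}$ through the centre $p$; since every diameter segment has $p$ as its midpoint and cannot lie along this chord, its endpoints fall on opposite open sides, so neither part of the induced division attains $D(C)$. For the converse, the paper simply assumes that a $2$-division realizing $\alpha(C)=2$ ``is determined by a curve'' with endpoints $v_1,v_2\in\ptl C$ and shows such an endpoint cannot be a vertex of $G_C$. You avoid that reduction: working with an arbitrary partition $C=S_1\sqcup S_2$ and arguing by contraposition from $V=\ptl C$, you use that the antipodal involution swaps $A=S_1\cap\ptl C$ and $B=S_2\cap\ptl C$ and that $\ptl C$ is connected to produce a point $z\in\overline{A}\cap\overline{B}$ whose antipode $z'$ also lies in $\overline{A}$, whence $D(S_1)\geq d(z,z')=D(C)$. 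This is the same underlying idea as the paper's (a common limit point of the two parts on the boundary, paired with its antipode), but your connectedness argument rigorously justifies a step the paper takes for granted, at the cost of a somewhat longer proof; all the auxiliary claims you flag (the involution carries $A$ onto $B$, $\overline{A}\cup\overline{B}=\ptl C$ forces $\overline{A}\cap\overline{B}\neq\emptyset$ when both are nonempty, and the limiting distance estimate, valid because the diameter of a possibly non-closed part is a supremum) do hold.
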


\begin{proof}
Assume firstly that $\alpha(C)=2$,
and let $\{C_1,\,C_2\}$ be a division of $C$ with $D(C_i)<D(C)$, $i=1,2$.
We can also assume that the division is determined by a curve in $C$
with endpoints $v_1$, $v_2\in\ptl C$.
It follows that $v_1\notin V$
(otherwise, its symmetric point $v_1'$ will also belong to  $V$, by Lemma~\ref{le:through},
and so $D(C_j)\geq d(v_1,v_1')=D(C)$, where $C_j$ is the subset containing $v_1'$),
and therefore $V \neq \ptl C$.

Assume now that $V\neq\ptl C$, and consider $x\in\ptl C$ such that $x\notin V$.
By using again Lemma~\ref{le:through}, we have that its symmetric point $x'$ will have the same property.
The division of $C$ given by the line segment $\overline{x\,x'}$ satisfies that
the two corresponding subsets have diameter strictly less than $D(C)$,
since no edge of $G_C$ will be contained in any of them
(in fact, $\overline{x\,x'}$ splits any diameter segment of $C$),
and so $\alpha(C)=2$.
\end{proof}

\begin{remark}
If a centrally symmetric planar convex body $C$ has associated diameter graph with finite set of vertices,
then Lemma~\ref{le:union} trivially holds, and so $C$ will have Borsuk number equal to two.
\end{remark}

We can now state the following result,
which proves that the unique centrally symmetric planar convex bodies
with Borsuk number equal to three are the Euclidean balls.

\begin{theorem}
\label{th:main}
Let $C\in\C$. Then, $\alpha(C)=3$ if and only if $C$ is an Euclidean ball.
\end{theorem}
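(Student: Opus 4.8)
The plan is to apply Lemma~\ref{le:union}: since $\alpha(C)=3$ is the only alternative to $\alpha(C)=2$ (Borsuk's question being true in $\rr^2$), the characterization reduces to showing that $V=\ptl C$ if and only if $C$ is a Euclidean ball. The easy direction is that a Euclidean ball satisfies $V=\ptl C$: every boundary point is an endpoint of a diameter segment (namely the one through the center), so $\alpha(C)=3$ follows immediately from the contrapositive of Lemma~\ref{le:union}. The substance lies in the converse.

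So the main task is: assuming $C\in\C$ with $V=\ptl C$, conclude that $C$ is a Euclidean ball. First I would invoke Lemma~\ref{le:through}, which tells us that every diameter segment passes through the center of symmetry $p$, and that for each $x\in\ptl C$ the diameter segment through $x$ has its other endpoint at the antipode $x'$ (the symmetric point of $x$ with respect to $p$), with $d(x,x')=D(C)$. The hypothesis $V=\ptl C$ means \emph{every} boundary point $x$ is a vertex of $G_C$, hence is an endpoint of some diameter segment; by central symmetry and Lemma~\ref{le:through} this forces $d(x,p)=D(C)/2$ for \emph{every} $x\in\ptl C$. In other words, all boundary points lie at the same distance from $p$, which is precisely the definition of a circle of radius $D(C)/2$ centered at $p$.

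The step I expect to require the most care is confirming that \emph{each} boundary point $x$, being a vertex, must be an endpoint of a diameter segment whose \emph{other} endpoint is exactly its antipode $x'$, so that $d(x,p)$ is pinned down to the constant $D(C)/2$. A priori, $x\in V$ only guarantees that $x$ is joined by a diameter segment to \emph{some} boundary point; one must verify this partner is the central antipode $x'$. This follows because Lemma~\ref{le:through} forces the diameter segment through $x$ to pass through $p$, and a chord through the center of a centrally symmetric convex body meets the boundary exactly at a pair of antipodal points; thus $d(x,x')=D(C)$ and $d(x,p)=D(C)/2$. Once every boundary point is shown to lie on the circle of radius $D(C)/2$ about $p$, convexity of $C$ implies $C$ is the full closed disk, completing the argument.
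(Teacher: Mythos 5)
Your proposal is correct and follows essentially the same route as the paper: reduce via Lemma~\ref{le:union} to showing $V=\ptl C$ forces $d(z,p)=D(C)/2$ for every boundary point $z$, using Lemma~\ref{le:through} to identify the partner of $z$ as its central antipode $z'$. The paper states the step $d(z,z')=D(C)$ a bit more tersely, whereas you spell out why the diameter segment through $z$ must end at $z'$; this is a welcome clarification but not a different argument.
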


\begin{proof}
It is well known that any planar Euclidean ball has Borsuk number equal to three \cite{borsuk}.
On the other hand, consider the diameter graph $G_C=(V,E)$ associated to $C$,
with $V=\ptl C$ in view of Lemma~\ref{le:union}. %
Hence, for any $z\in\ptl C$, we have that $z\in V$ and so $d(z,z')=D(C)$,
where $z'\in\ptl C$ is the symmetric point of $z$ with respect to the center of symmetry $p$ of $C$.
This implies that $d(z,p)=1/2\,D(C)$, and so $z\in\ptl B(p,1/2\,D(C))$,
where $B(q,r)$ denotes the Euclidean ball centered at $q$ of radius $r$.
This holds for all $z\in\ptl C$, and consequently, $\ptl C=\ptl B(p,1/2\,D(C))$, as desired.
\end{proof}

\begin{remark}
As noted in the Introduction, some characterizations in the spirit of Lemma~\ref{le:union}
and Theorem~\ref{th:main}
have appeared previously in literature. Boltyanskii~\cite[Th.~1.3]{bolt-gohb} proved that
a planar set has Borsuk number equal to three if and only if
the corresponding completion to a constant width set is unique.
Unfortunately, this nice result seems hard to be applied
(in general, given a planar set, it is difficult
to check the uniqueness of a completion of this type).
Later on, a result by K.~Ko{\l}odziejczyk \cite[Th.~3.1]{kolo}
shows that a planar convex body $C$ has Borsuk number equal to two
if and only if there exists a chord in $C$ intersecting any diameter segment of $C$,
which is equivalent to our Lemma~\ref{le:union} in the centrally symmetric case.
The proof of this result is based on set theory and,
in particular, on Zorn's Lemma~\cite[Appendix~2, Cor.~2.5]{lang}.
More generally, D.~Ko{\l}odziejczyk~\cite[Appendix]{danuta} proved that $\alpha(C)=n+1$ if and only if $C$ is an Euclidean ball,
for any convex body $C\subset\rr^n$ whose diameter segments have a common point,
which includes our Theorem~\ref{th:main}.
\end{remark}

\begin{remark}
A relative optimization problem involving the diameter functional, treated in~\cite{bisections} (see also~\cite{mejora}),
has a direct connection with Theorem~\ref{th:main}.
Those papers focus on centrally symmetric planar convex bodies,
searching for the divisions into two subsets that \emph{minimize} the \emph{maximum relative diameter}
(which is defined as the maximum of the diameters of the two subsets of the division).
Our Theorem~\ref{th:main} yields that the unique sets
which are not suitable for this problem are Euclidean balls:
any division ${C_1,C_2}$ of a given ball $\mathcal D$
will satisfy that $D(C_1)=D(C_2)=D(\mathcal D)$, since $\alpha(\mathcal D)=3$.
Therefore, the maximum relative diameter functional is constant for all the divisions of $\mathcal{D}$,
and this minimization problem is not meaningful for this particular set
(any division of $\mathcal D$ can be considered minimizing).
\end{remark}

\section{General planar convex bodies}
\label{se:non}

In this section we will focus on planar convex bodies, removing the central symmetry hypothesis.
This more general setting presents some remarkable differences. In particular, the associated diameter graph
is not always bipartite: this can be easily seen by considering, for instance, an equilateral triangle
(or some isosceles ones).
Theorem~\ref{prop:non} below gives a characterization
of the planar convex bodies with Borsuk number equal to two,
and allows to find examples with Borsuk number equal to three,
which are different from the Euclidean balls.

\begin{theorem}
\label{prop:non}
Let $C$ be a planar convex body, and let $G_C=(V,E)$ be the diameter graph associated to $C$.
Then, $\alpha(C)=2$ if and only if $G_C$ is bipartite, with a decomposition $V=V_R\cup V_B$
(as indicated in Remark~\ref{obs:altern})
such that the closures of $V_R$ and $V_B$ have empty intersection (that is, $\overline{V_R}\cap\overline{V_B}=\emptyset$).
\end{theorem}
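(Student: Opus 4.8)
The plan is to prove both implications with the same dividing-chord idea that worked in the centrally symmetric case (Lemma~\ref{le:union}), now organised around the bipartition and the classical fact that any two diameter segments of a planar set must intersect (otherwise, among their four endpoints some pair would be farther apart than $D(C)$). Throughout I use that the closure of a set has the same diameter, and that $\alpha(C)\ge 2$ is automatic since $C$ itself realises $D(C)$.

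I would first dispatch the easier implication ($\Leftarrow$). Assuming $G_C$ is bipartite with contiguous classes $V_R,V_B$ and $\overline{V_R}\cap\overline{V_B}=\emptyset$, the curve $\ptl C$ splits into the arc carrying $\overline{V_R}$, the arc carrying $\overline{V_B}$, and two nonempty open gaps $g_1,g_2$ containing no vertex (nonempty precisely because two disjoint compact sets on $\ptl C$ are at positive distance). Picking $x\in g_1$ and $y\in g_2$, the chord $\overline{x\,y}$ divides $C$ into convex pieces $C_1\supseteq V_R$ and $C_2\supseteq V_B$. Since $G_C$ is bipartite, every diameter segment joins a red to a blue vertex and is therefore cut by $\overline{x\,y}$, so no diameter segment lies in a single $C_i$. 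To upgrade this to $D(C_i)<D(C)$: were $D(C_i)=D(C)$ attained by $u,v\in C_i$, then $\overline{u\,v}$ would be an edge of $G_C$ with both endpoints the same colour, impossible in a bipartite graph. Hence $\alpha(C)=2$.

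For ($\Rightarrow$), fix a partition $C=A\sqcup B$ with $D(A),D(B)<D(C)$ and colour each vertex by the piece containing it: $V_R=V\cap A$, $V_B=V\cap B$. A diameter segment has length $D(C)$, so its endpoints cannot both lie in $A$ nor both in $B$; thus every edge is bichromatic and $G_C$ is bipartite. For the closures, suppose $z\in\overline{V_R}\cap\overline{V_B}$, with blues $b_n\to z$. Their diameter partners $r(b_n)$ are red vertices, hence lie in $A$; passing to a subsequence $r(b_n)\to\rho\in\bar A$, and continuity gives $d(z,\rho)=\lim d(b_n,r(b_n))=D(C)$. Since also $z\in\overline{V_R}\subseteq\bar A$, this forces $D(\bar A)\ge D(C)$, contradicting $D(\bar A)=D(A)<D(C)$. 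Hence $\overline{V_R}\cap\overline{V_B}=\emptyset$.

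The remaining, and main, difficulty is contiguity: that $V_R$ and $V_B$ each occupy a single arc of $\ptl C$, so that the bipartition is of the form described in Remark~\ref{obs:altern}. Here I would use that all vertices lie on the convex curve $\ptl C$, so the pairwise-intersection of diameter segments means no two edges of $G_C$ are disjoint chords. A non-contiguous colouring would produce four vertices in cyclic order $r,b,r',b'$ with $r,r'\in V_R$ and $b,b'\in V_B$; a short analysis of the edges emanating from these vertices then exhibits two edges whose chords neither cross nor share an endpoint, contradicting the pairwise-intersection property. I expect this alternation-exclusion step to be the technical crux, since one must rule out every way the incident edges could be routed, whereas the bipartiteness and the closure argument above are routine. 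Combining the three facts yields a bipartition as in Remark~\ref{obs:altern} with disjoint closures, completing the equivalence.
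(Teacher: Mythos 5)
Your backward implication and the bipartiteness-plus-disjoint-closures part of the forward implication match the paper's argument essentially line for line (same chord through the two vertex-free gaps, same sequence/compactness argument for the closures). The genuine problem is the step you yourself flag as the crux and leave unproved: contiguity of the decomposition in the forward direction. Because you colour the vertices by membership in an \emph{arbitrary} partition $C=A\sqcup B$, you must then show that $V\cap A$ and $V\cap B$ occupy single arcs of $\ptl C$, and your proposed route --- that any non-contiguous proper $2$-colouring of $G_C$ forces two diameter segments that neither cross nor share an endpoint --- is false. Take $C$ a regular hexagon with vertices $v_1,\dots,v_6$ in cyclic order: the diameter graph is the perfect matching $\{\overline{v_1v_4},\overline{v_2v_5},\overline{v_3v_6}\}$, the three chords pairwise cross at the centre, and the alternating colouring $V_R=\{v_1,v_3,v_5\}$, $V_B=\{v_2,v_4,v_6\}$ is a proper $2$-colouring that is maximally non-contiguous, with no contradiction available from pairwise intersection. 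Worse, this colouring genuinely arises from your construction: starting from the division of the hexagon by a horizontal chord and exchanging small neighbourhoods of $v_2\leftrightarrow v_5$ and $v_6\leftrightarrow v_3$ between the two pieces yields a legitimate partition $A\sqcup B$ with both diameters strictly below $D(C)$ whose induced colouring is exactly the alternating one. So your forward direction produces a bipartition with disjoint closures that need not be of the form required by Remark~\ref{obs:altern}, and no local repair of the ``alternation-exclusion'' lemma can work.

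The paper avoids this entirely by first reducing (as in Lemma~\ref{le:union}) to a division $\{C_1,C_2\}$ determined by a curve with endpoints $v_1,v_2\in\ptl C$, and then setting $V_R=V\cap\ptl C_1$, $V_B=V\cap\ptl C_2$: since $v_1,v_2$ cut $\ptl C$ into two complementary arcs, contiguity is automatic from the construction and never needs a graph-theoretic argument. If you adopt that reduction (or, alternatively, discard your colouring and build a contiguous one from the arcs, re-verifying the disjoint-closure property for it), the rest of your argument goes through; as written, the proof is incomplete at its central step.
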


\begin{proof}
Assume firstly that $\alpha(C)=2$, and let $\{C_1,C_2\}$ be a division of $C$ with $D(C_i)<D(C)$, $i=1,2$,
determined by a curve with endpoints $v_1$, $v_2\in\ptl C$.
Then, by considering $V_R=V\cap\ptl C_1$ and $V_B=V\cap\ptl C_2$,
we will have a decomposition of $V$ which implies that $G_C$ is a bipartite graph:
if an arbitrary edge $\overline{x\,y}$ in $G_C$ has both vertices $x$, $y$ in (say) $V_R$, then
$$D(C_1)\geq d(x,y)=D(C),$$
yielding a contradiction.
In addition, the previous decomposition gives that $\overline{V_R}\cap \overline{V_B}\subseteq\{v_1,\,v_2\}$.
Suppose that $v_1\in\overline{V_R}\cap \overline{V_B}$.
In particular, there exists a sequence $\{x_R^n\}_n\subset V_R$ which converges to $v_1$.
Moreover, for each element $x_R^n$ of the sequence, there exists $x_B^n\in V_B$ with
\begin{equation}
\label{eq:distance}
d(x_R^n,x_B^n)=D(C).
\end{equation}
Note that the sequence $\{x_B^n\}_n$ is contained in $\ptl C$, which is compact,
so we can assume that $\{x_B^n\}_n$ converges to a certain $w\in\ptl C$.
Without loss of generality, we can assume that $w\in C_1$.
Taking now limit in \eqref{eq:distance} when $n$ tends to infinite,
it follows that $D(C_1)\geq d(v_1,w)=D(C)$,
which is again a contradiction. So necessarily $\overline{V_R}\cap \overline{V_B}=\emptyset$, as stated.

Now assume that $G_C$ is bipartite with $V=V_R\cup V_B$ as in Remark~\ref{obs:altern},
satisfying $\overline{V_R}\cap\overline{V_B}=\emptyset$.
These hypotheses imply that we can find two compact connected curves $\beta_1$, $\beta_2$ contained in $\ptl C$
such that $V_R$, $\beta_1$, $V_B$, $\beta_2$ appear successively along $\ptl C$ in this order.
Take $x_i\in\beta_i$, for $i=1,2$. The line segment $\overline{x_1\,x_2}$ divides $C$
into two subsets with diameters smaller than $D(C)$, and so $\alpha(C)=2$.
\end{proof}

\begin{remark}
We point out that, when a diameter graph $(V,E)$ is bipartite,
we cannot directly deduce that there is a decomposition $V=V_R\cup V_B$
with no alternation between the vertices of $V_R$ and the vertices of $V_B$
(as indicated in Remark~\ref{obs:altern}).
\end{remark}

\begin{remark}
Any planar convex body which does not satisfy the conditions stated in Theorem~\ref{prop:non}
will have Borsuk number equal to three.
This is the case for any regular polygon with an odd number of vertices, or any Reuleaux polygon
(it is easy to check that the corresponding diameter graphs are not bipartite).
Moreover, any small convex variation of the previous sets preserving the corresponding diameters segments
will have the same property.

\begin{figure}[h]
\centering
	\includegraphics[width=0.62\textwidth]{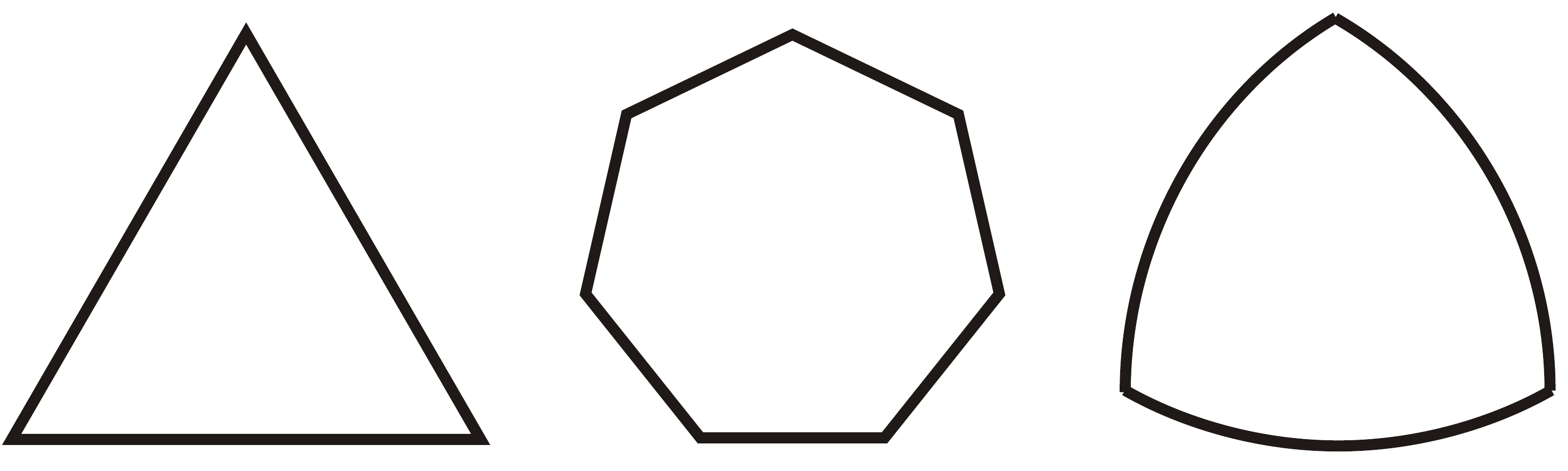}
	\caption{Some planar convex bodies with Borsuk number equal to three}
\label{fig:3}
\end{figure}

\end{remark}

The following Example~\ref{ex:pentagons} describes the construction of
a family of planar convex bodies with Borsuk number equal to three.
\begin{example}
\label{ex:pentagons}
Let $C_1$ be a circle centered at the origin $o=(0,0)$ with radius $r>0$.
Let $a=(r,0)$, and consider $C_2$ the circle centered at $a$ with radius $r$.
Call $L$ the symmetric lens given by the intersection of the circles $C_1$ and $C_2$.
Choose two arbitrary points $b\in L\cap C_2$, and $c\in L\cap C_1$,
both of them in the upper half-plane,
and call $d$ be the intersection point of the two circles centered at $b$ and $c$ with radius $r$,
which is contained in $L$.
Then, the convex hull $C$ of points $\{o,a,b,c,d\}$ has five diameters segments,
namely $\overline{oa}$, $\overline{ab}$, $\overline{bd}$, $\overline{cd}$ and $\overline{oc}$,
and the associated diameter graph $G_C$ is not bipartite.
By applying Theorem~\ref{prop:non}, we conclude that $\alpha(C)=3$.
Figure~\ref{fig:pentagon2} below shows a particular set of this family for radius $r=4$.
We also note that slight convex variations of $C$ will provide new non-polynomial examples with
Borsuk number equal to three.
\end{example}

\begin{figure}[ht]
\centering
	\includegraphics[width=1.2\textwidth]{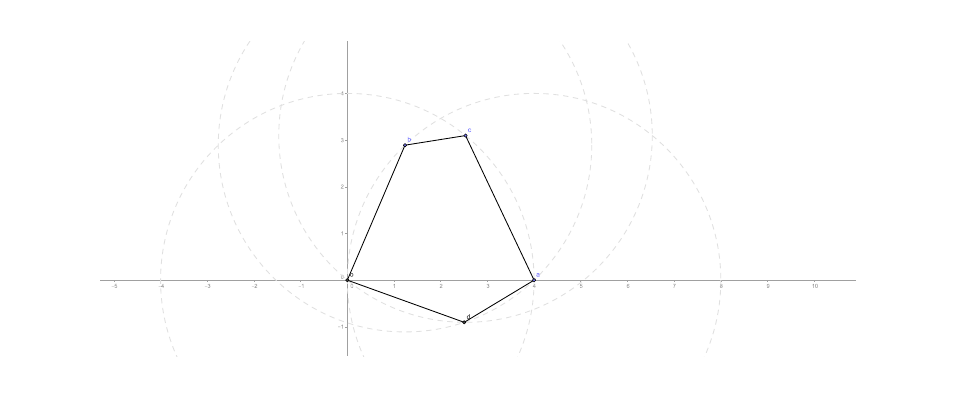}
	\caption{Another planar convex body with Borsuk number equal to three}
\label{fig:pentagon2}
\end{figure}

Although Borsuk's original question is not stated in a convex setting,
the following related problem can be posed:
given a \emph{convex} body $C$ in $\rr^n$,
is it possible to divide $C$ into $n+1$ \emph{convex} subsets with strictly smaller diameters than $C$?
Corollary~\ref{co:convex} below gives an affirmative answer to this question in the planar case.

\begin{corollary}
\label{co:convex}
Let $C$ be a planar convex body, with Borsuk number $\alpha(C)$.
Then $C$ can be divided into $\alpha(C)$ convex subsets with diameters strictly smaller than $D(C)$.
\end{corollary}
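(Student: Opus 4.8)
The plan is to distinguish the two possible values of $\alpha(C)$. Since Borsuk's question holds in $\rr^2$ and $D(C)>0$, we always have $\alpha(C)\in\{2,3\}$. The general observation I would rely on is elementary: any chord of $C$, that is, any line segment $\overline{x_1\,x_2}$ with $x_1,x_2\in\ptl C$, splits $C$ into the two pieces $C\cap H^+$ and $C\cap H^-$, where $H^+$ and $H^-$ are the two closed half-planes determined by the line through $x_1$ and $x_2$. As the intersection of two convex sets, each piece is itself convex. Hence, whenever a division of $C$ into pieces of strictly smaller diameter can be realized by chords, the resulting pieces are automatically convex.

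For the case $\alpha(C)=2$, I would appeal to Theorem~\ref{prop:non}. Since $\alpha(C)=2$, that result guarantees that $G_C$ is bipartite with a decomposition $V=V_R\cup V_B$ satisfying $\overline{V_R}\cap\overline{V_B}=\emptyset$; and, as shown in its proof, such a configuration allows one to construct a single line segment $\overline{x_1\,x_2}$ dividing $C$ into two subsets with diameters strictly smaller than $D(C)$. By the observation above, these two subsets are convex, which settles this case.

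For the case $\alpha(C)=3$, I would invoke P\'al's theorem: $C$ is contained in a regular hexagon $H$ of width $D(C)$. The standard planar Borsuk partition of $H$, obtained by joining the center of $H$ to the midpoints of three alternate sides, decomposes $H$ into three congruent pieces $H_1,H_2,H_3$. A direct check shows that each $H_i$ is a convex pentagon whose diameter equals $\frac{\sqrt3}{2}\,D(C)<D(C)$. Setting $C_i=C\cap H_i$, each $C_i$ is convex, being an intersection of convex sets, and satisfies $D(C_i)\leq D(H_i)<D(C)$. Thus $\{C_1,C_2,C_3\}$ is a division of $C$ into $3=\alpha(C)$ convex subsets with strictly smaller diameters.

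The only point requiring genuine verification, and the main (if modest) obstacle, is the claim that the three hexagonal pieces are convex and have diameter strictly below $D(C)$. This reduces to a coordinate computation for the regular hexagon: placing its center at the origin and writing down the five vertices of each pentagon $H_i$, one checks that every interior angle does not exceed $\pi$, so that the pieces are convex, and that the largest pairwise distance among those vertices equals $\frac{\sqrt3}{2}\,D(C)$. Everything else follows from the stability of convexity under intersection and from the results already established.
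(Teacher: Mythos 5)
Your proof is correct and follows essentially the same route as the paper: the hexagon partition of P\'al--Borsuk for the case $\alpha(C)=3$, and a single chord for the case $\alpha(C)=2$, with convexity of the pieces coming from intersecting convex sets. The only (harmless) difference is that for $\alpha(C)=2$ you obtain the chord by re-running the construction in the proof of Theorem~\ref{prop:non}, whereas the paper takes an arbitrary dividing curve and replaces it by the segment joining its endpoints; if anything, your variant avoids having to justify that this replacement preserves the smaller-diameter property.
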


\begin{proof}
Recall that $\alpha(C)\in\{2,3\}$ in the planar case.
If $\alpha(C)=3$, the original reasoning by Borsuk (already outlined in the Introduction) proves the claim:
$C$ is contained in the regular hexagon $H$ of width equal to $D(C)$,
and $H$ can be divided into three appropriate congruent subsets by using three line segments.
Then, the induced division of $C$ will consist of three convex subsets with smaller diameters,
due to the convexity of $C$. 
Assume now that $\alpha(C)=2$.
Then, there exists a curve $\gamma$ in $C$
which provides a division of $C$ into two subsets with smaller diameters.
It is clear that the line segment joining the endpoints of $\gamma$
also provides a division of $C$ with the same property.
The two subsets determined by this division are convex, again because $C$ is convex, as desired.
\end{proof}

%


%



\end{document}